\newtheorem{theorem}{Theorem}[section]
\newtheorem{lemma}[theorem]{Lemma}
\theoremstyle{definition}
\theoremstyle{remark}
\newtheorem{remark}[theorem]{Remark}
\newcommand{\be}{\begin{equation}}
\newcommand{\ee}{\end{equation}}
\begin{document}

\title{ Fractal Interpolation Function On Products of the Sierpi\'nski Gaskets }



\author{S. A. Prasad}
\address{Department of Mathematics \& Statistics, IIT Tirupati, Tirupati, India 517506}
\email{srijanani@iittp.ac.in}
\author{S. Verma}
\address{Department of Applied Sciences, IIIT Allahabad, Prayagraj, India 211015}
\email{saurabhverma@iiita.ac.in}



\subjclass[2010]{Primary 28A80;}


\keywords{Fractal dimension, Fractal interpolation, Sierpi\'nski gasket, H\"older continuous, Smoothness  }

\begin{abstract}
In this paper, we aim to construct fractal interpolation function(FIF) on the product of two Sierpi\'nski gaskets. Further, we collect some results regarding smoothness of the constructed FIF. We prove, in particular, that the FIF are H\"older functions under specific conditions. In the final section, we obtain some bounds on the fractal dimension of FIF.
\end{abstract}

\maketitle



\section{Introduction} 
Barnsley \cite{MF2} introduced the concept of Fractal Interpolation Functions (FIFs) on the interval using the theory of Iterated Function System (IFS). Since then, FIFs have proven to be an invaluable tool for interpolating experimental data by a non-smooth curve, with numerous applications in engineering~\cite{baldo94}, biological sciences~\cite{havlina95}, planetary science~\cite{iovane04} and arts~\cite{voss89}. FIFs of various types, such as Hidden-variable FIFs, Coalescence Hidden-variable FIFs, Hermite FIFs, Spline FIFs, and Super FIFs, have been built in~\cite{barnsley89_2, chand07, navascues04, navas-sebas04, srijanani15_1, srijanani21} and properties such as smoothness, approximation property, regularity, multiresolution analysis, reproducing kernel, node insertion, fractional calculus, dimension property have been studied in~\cite{SS2, gang96, navascues03, srijanani14_1, srijanani13_2, srijanani14_2, srijanani13_1,pan14, srijanani17, ruan09, AKBCK, Ver1}. The development of Fractal Interpolation Surfaces or multivariable FIFs formed by employing higher dimensional or recurrent IFSs has been addressed in~\cite{massopust90,geronimo93,massopust93,dalla02}.

Celik et al. \cite{Celik} defined FIFs on the Sierpinski Gasket (SG), a well-known fractal domain. Further, these FIFs were studied  by Ruan \cite{Ruan3} on the basis of p.c.f. self-similar sets. In \cite{Ruan3}, Ruan extended his work on p.c.f. self-similar sets which were introduced by Kigami by defining the FIFs on these sets. On $SG$, Ri and Ruan \cite{Ruan4} explored some properties of uniform fractal interpolation functions, a particular class of FIFs. First, they investigated the min-max property of uniform FIFs. Then, they determined a necessary and sufficient condition for uniform FIFs to have finite energy. Uniform FIFs’s normal derivatives and laplacian were also discussed.  Sahu and Priyadarshi \cite{SP} computed  some bounds for box dimensions of harmonic functions on the $SG$. They also constructed a fractal interpolation function on the $SG$. Recently, Agrawal and Som \cite{EPJST} estimated box dimension of FIF on the $SG$. Further, they \cite{RIM} used FIFs to approximate functions on $SG.$ In \cite{NVV2}, Navascu\'es et al. constructed vector-valued FIFs on SG and studied several approximation theoretic results. Strichartz \cite{BS,Stri} started analysis on the products of SGs and developed a new concept of partial differential equations on the products of SGs. We believe that analogues to Ruan's work \cite{Ruan3,Ruan4} on FIFs on SG and Navascu\'es's work \cite{navascues03,navascues04} on fractal approximation, our work will find several future applications.

The purpose of this research is to develop fractal interpolation functions on the product of two Sierpi\'nski gaskets. The smoothness and bounds on box-counting dimension are obtained after creating the FIF on the product of two Sierpi\'nski gaskets. The organization of the paper is as follows: Section~\ref{sec:constr}  provides a brief introduction to the Iterated Function System and Attractor. The same Section also describes the construction of a Fractal Interpolation Function on the product of two Sierpi\'nski gaskets. Section~\ref{sec:sm} contains an instructive description of the smoothness of a FIF formed on the product of two Sierpi\'nski gaskets. Finally, Section~\ref{sec:fd} discusses the bounds of Fractal dimension of the FIF built on the product of two Sierpi\'nski gaskets. 

\section{Fractal interpolation function on $SG' \times SG''$}\label{sec:constr}
Let $\{X;W_i,i=1,2,\dots,k\}$ be an Iterated Function System (IFS), where $(X,d) $ is a complete metric space and $W_i:X \to X$ are contractive mappings with contraction ratio $\alpha_i$ respectively.
The IFS generates the mapping $W$ from $\mathcal{K}(X)$ into $\mathcal{K}(X)$ given by
$$ \mathcal{W}(K) = \cup_{i=1}^k W_i (K),$$
where $\mathcal{K}(X)$ is the collection of all non-empty compact subsets of $X.$
The Hutchinson-Barnsley map $\mathcal{W}$ defined above is then a contraction
mapping, with respect to the Hausdorff metric $h_d$, the contraction ratio $\alpha$ of $\mathcal{W}$ is equal to $\max\{\alpha_1, \alpha_2, \dots, \alpha_k\}$. Then, by the Banach
contraction principle, there exists a unique nonempty compact subset $K_*$ such that
$K_*= \cup_{i=1}^k W_i(K_*)$, The set $K_*$ is termed as the attractor of the IFS. For further details, the reader is referred to \cite{MF2}.
\par
Let $U_0=\{p_1,p_2,p_3\}$ be the vertices of an equilateral triangle on the plane $\mathbb{R}^2$ and $L_i(t)=\frac{1}{2}( t + p_i),\ i=1,2,3$ be three contractions of the plane which constitute an iterated functions system. For $n \in \mathbb{N},$ we denote the collection of all words with length $n$ by $\{1,2,3\}^n,$ i.e. if $\boldsymbol{\omega} \in \{1,2,3\}^n$ then $\boldsymbol{\omega}=\omega_1,\omega_2, \dots ,\omega_n$ where $\omega_i \in \{1,2,3\}.$ We define, for $\boldsymbol{\omega} \in \{1,2,3\}^N$, $$L_{\boldsymbol{\omega}}= L_{\omega_1} \circ L_{\omega_2} \circ \dots \circ L_{\omega_N}.$$ $SG'$ is the attractor of the system:
     $$ SG'= L_1(SG') \cup L_2(SG') \cup L_3(SG').$$
Similarly, consider $W_0=\{q_1,q_2,q_3\}$ be the vertices of another equilateral triangle on another $\mathbb{R}^2$ plane and $K_i(t)=\frac{1}{2}( t + q_i), i=1,2,3$ be three contractions of the second plane constituting another iterated functions system. Then, we have another attractor $$ SG''= K_1(SG'') \cup K_2(SG'') \cup K_3(SG'').$$ 
Hence, we have $$ SG' \times SG''= \cup_{i=1,j=1}^3 L_i(SG') \times K_j(SG'').$$
Iteratively, we get $$ SG' \times SG''= \cup_{\boldsymbol{\omega} ,\boldsymbol{\eta}\in I^n} L_{\boldsymbol{\omega}}(SG') \times K_{\boldsymbol{\eta}}(SG'').$$
       
Define $V_N$ by $V_N=\{(L_{\boldsymbol{\omega}}(p_i),K_{\boldsymbol{\eta}}(q_j)):\boldsymbol{\omega},\boldsymbol{\eta} \in I^N,~ i, j \in I\}.$ The union of images of $V_{0}:=U_0 \times W_0$ under these iterations $(L_{\boldsymbol{\omega}}(.), K_{\boldsymbol{\eta}}(.)) $ with $|w|=N$ constitutes the set of N-th stage vertices $V_N$ of $SG' \times SG''$. Further, define $V_{*}= \cup_{N=1} ^{\infty}V_N.$

Let $\{(y,z_y):y \in V_N\} \subset SG' \times SG''\times \mathbb{R}$ be a data set such that $z_y$ is zero on the boundary of $SG' \times SG''$. We construct an IFS whose attractor is the graph of a function $f:SG' \times SG'' \to \mathbb{R}$ such that $$f(y)=z_y~~~\text{for all}~ y \in V_N.$$

Let $X=SG' \times SG''\times \mathbb{R}$ and define maps $W_{\boldsymbol{\omega \eta}} :X \rightarrow X$ by $$W_{\boldsymbol{\omega \eta}}(t,s,x)=\Big(L_{\boldsymbol{\omega}}(t),K_{\boldsymbol{\eta}}(s),F_{\boldsymbol{\omega \eta}}(t,s,x)\Big),~~ \boldsymbol{\omega},\boldsymbol{\eta} \in I^N,$$
where $F_{\boldsymbol{\omega \eta}}: X \to \mathbb{R}$ is given by $$ F_{\boldsymbol{\omega \eta}}(t,s,x)= \alpha_{\boldsymbol{\omega \eta}}(t,s) x + h_{\boldsymbol{\omega}\boldsymbol{\eta}}(t,s), ~ \boldsymbol{\omega},\boldsymbol{\eta} \in I^N,$$ where $h_{\boldsymbol{\omega}\boldsymbol{\eta}}: SG' \times SG'' \to \mathbb{R}$ is a function such that
\begin{itemize}
    \item For $\boldsymbol{\omega}= \omega_1\omega_2\dots \omega_{N-1}i,$ $\boldsymbol{\tau}= \omega_1\omega_2\dots \omega_{N-1}j$,  $h_{\boldsymbol{\omega}\boldsymbol{\eta}}(p_j,s)=h_{\boldsymbol{\tau}\boldsymbol{\eta}}(p_i,s)$ for all $s \in SG''.$
    \item For $\boldsymbol{\eta}= \eta_1\eta_2\dots \eta_{N-1}i,$ $\boldsymbol{\xi}= \eta_1\eta_2\dots \eta_{N-1}j$,   $h_{\boldsymbol{\omega}\boldsymbol{\eta}}(t,q_j)=h_{\boldsymbol{\omega}\boldsymbol{\xi}}(t,q_i)$ for all $t \in SG'.$
    \item For $\boldsymbol{\omega}= \omega_1\omega_2\dots \omega_{N-1}i,$ $\boldsymbol{\tau}= \omega_1\omega_2\dots \omega_{N-1}j$, and $\boldsymbol{\eta}= \eta_1\eta_2\dots \eta_{N-1}k,$ $\boldsymbol{\xi}= \eta_1\eta_2\dots \eta_{N-1}l$,  $h_{\boldsymbol{\omega}\boldsymbol{\eta}}(p_j,q_l)=h_{\boldsymbol{\tau}\boldsymbol{\xi}}(p_i,q_k),$ for all $i,j,k,l \in I.$
\end{itemize}
and for each $ \boldsymbol{\omega},\boldsymbol{\eta} \in I^N$, $\alpha_{\boldsymbol{\omega}\boldsymbol{\eta}}: SG' \times SG''\to \mathbb{R}$ is a continuous function with $\|\alpha_{\boldsymbol{\omega}\boldsymbol{\eta}}\|_{\infty} < 1.$ 
     Then $\{X;W_{\boldsymbol{\omega \eta}}, \boldsymbol{\omega},\boldsymbol{\eta}  \in I^N\}$ is an IFS and admits an invariant set. 
   \begin{theorem}\label{MTconstr1}
 The IFS $\{X; W_{\boldsymbol{\omega}\boldsymbol{\eta}}, \boldsymbol{\omega}, \boldsymbol{\eta}  \in I^N\}$ defined above has a unique attractor $Gr(f).$ The set $Gr(f)$ is the graph of a continuous function $f: SG' \times SG''\rightarrow \mathbb{R}$ such that $f(y)=z_y~~~\text{for all}~ y \in V_N.$ The function $f$ is known as the Fractal Interpolation Function on the product $SG' \times SG''.$
\end{theorem}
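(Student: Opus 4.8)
The plan is to obtain $f$ as the fixed point of a Read--Bajraktarevi\'c operator and then to read the three assertions off the fixed-point identity. Let $\mathcal{F}$ be the set of continuous functions $g\colon SG'\times SG''\to\mathbb{R}$ that vanish on the boundary of $SG'\times SG''$ (equivalently, that agree with $y\mapsto z_y$ there). Since $SG'\times SG''$ is compact, $\mathcal{F}$ is a closed subspace of $\bigl(C(SG'\times SG''),\|\cdot\|_\infty\bigr)$, hence a complete metric space under $d(g_1,g_2)=\|g_1-g_2\|_\infty$. Each $L_{\boldsymbol\omega}$ (and each $K_{\boldsymbol\eta}$), being a composition of injective affine contractions of the plane, is a homeomorphism onto its image, and the $9^N$ cells $L_{\boldsymbol\omega}(SG')\times K_{\boldsymbol\eta}(SG'')$, $\boldsymbol\omega,\boldsymbol\eta\in I^N$, cover $SG'\times SG''$. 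I would therefore define $T\colon\mathcal{F}\to\mathcal{F}$ by
\[
(Tg)\bigl(L_{\boldsymbol\omega}(t),K_{\boldsymbol\eta}(s)\bigr)=F_{\boldsymbol\omega\boldsymbol\eta}\bigl(t,s,g(t,s)\bigr)=\alpha_{\boldsymbol\omega\boldsymbol\eta}(t,s)\,g(t,s)+h_{\boldsymbol\omega\boldsymbol\eta}(t,s),\qquad(t,s)\in SG'\times SG''.
\]

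The first and principal task is to verify that $T$ is well defined and maps $\mathcal{F}$ into itself. On each closed cell the prescription above is continuous, being the composition of the homeomorphism $\bigl(L_{\boldsymbol\omega},K_{\boldsymbol\eta}\bigr)^{-1}$ with the continuous function $(t,s)\mapsto\alpha_{\boldsymbol\omega\boldsymbol\eta}(t,s)g(t,s)+h_{\boldsymbol\omega\boldsymbol\eta}(t,s)$, so by the pasting lemma it is enough to show that the formulas coming from two cells agree on their (closed) intersection. When a point common to two distinct cells is written relative to either cell as $\bigl(L_{\boldsymbol\omega}(t),K_{\boldsymbol\eta}(s)\bigr)$, at least one of $t,s$ is a vertex $p_k$ of $SG'$ or $q_l$ of $SG''$, because the junction points of the cell structure are images of such vertices; since $g$ vanishes on $\bigl(\{p_1,p_2,p_3\}\times SG''\bigr)\cup\bigl(SG'\times\{q_1,q_2,q_3\}\bigr)$, the term $\alpha_{\boldsymbol\omega\boldsymbol\eta}\,g$ then drops out, and the compatibility conditions imposed on the functions $h_{\boldsymbol\omega\boldsymbol\eta}$ are exactly what force the remaining $h$-terms to coincide on the various types of overlap (a copy of $SG''$ over a vertex of $SG'$, a copy of $SG'$ over a vertex of $SG''$, and a single corner $(p_i,q_j)$). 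The same computation on the boundary of $SG'\times SG''$, using that the corners $(p_i,q_j)$ are boundary points so $g(p_i,q_j)=0$, shows $Tg$ vanishes there too, i.e. $Tg\in\mathcal{F}$. I expect this gluing/compatibility bookkeeping to be the technical core of the argument.

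That $T$ is a contraction is then routine: for $g_1,g_2\in\mathcal{F}$ and every cell,
\[
\bigl|(Tg_1-Tg_2)\bigl(L_{\boldsymbol\omega}(t),K_{\boldsymbol\eta}(s)\bigr)\bigr|=\bigl|\alpha_{\boldsymbol\omega\boldsymbol\eta}(t,s)\bigr|\,\bigl|g_1(t,s)-g_2(t,s)\bigr|\le c\,\|g_1-g_2\|_\infty,
\]
with $c:=\max_{\boldsymbol\omega,\boldsymbol\eta\in I^N}\|\alpha_{\boldsymbol\omega\boldsymbol\eta}\|_\infty<1$ (a maximum of finitely many numbers, each $<1$); taking the supremum over all cells gives $d(Tg_1,Tg_2)\le c\,d(g_1,g_2)$. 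By the Banach fixed-point theorem $T$ has a unique fixed point $f\in\mathcal{F}$, which is continuous on $SG'\times SG''$ by membership in $\mathcal{F}$.

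Finally I would extract the three conclusions from the identity $f\bigl(L_{\boldsymbol\omega}(t),K_{\boldsymbol\eta}(s)\bigr)=\alpha_{\boldsymbol\omega\boldsymbol\eta}(t,s)\,f(t,s)+h_{\boldsymbol\omega\boldsymbol\eta}(t,s)$. Taking $(t,s)=(p_i,q_j)$ and using $f(p_i,q_j)=0$ gives $f\bigl(L_{\boldsymbol\omega}(p_i),K_{\boldsymbol\eta}(q_j)\bigr)=h_{\boldsymbol\omega\boldsymbol\eta}(p_i,q_j)=z_{(L_{\boldsymbol\omega}(p_i),K_{\boldsymbol\eta}(q_j))}$ (the last equality being the vertex normalization built into the $h_{\boldsymbol\omega\boldsymbol\eta}$), and since every $y\in V_N$ has the form $\bigl(L_{\boldsymbol\omega}(p_i),K_{\boldsymbol\eta}(q_j)\bigr)$ this is precisely $f(y)=z_y$ on $V_N$. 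For the attractor statement, the same identity shows $W_{\boldsymbol\omega\boldsymbol\eta}\bigl(Gr(f)\bigr)$ is the graph of $f$ restricted to the cell $L_{\boldsymbol\omega}(SG')\times K_{\boldsymbol\eta}(SG'')$; summing over $\boldsymbol\omega,\boldsymbol\eta\in I^N$ and using the covering property yields $\bigcup_{\boldsymbol\omega,\boldsymbol\eta}W_{\boldsymbol\omega\boldsymbol\eta}\bigl(Gr(f)\bigr)=Gr(f)$, so $Gr(f)$ is invariant under the IFS and, by uniqueness of the attractor, is \emph{the} attractor. Assembling these points proves the theorem.
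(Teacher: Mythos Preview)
Your proposal is correct and follows essentially the same route as the paper: define a Read--Bajraktarevi\'c operator on a closed subspace of $\mathcal{C}(SG'\times SG'')$ consisting of functions vanishing on the ``boundary'' $\bigl(\{p_i\}\times SG''\bigr)\cup\bigl(SG'\times\{q_j\}\bigr)$, verify well-definedness via the matching conditions on $h_{\boldsymbol\omega\boldsymbol\eta}$ at cell overlaps, apply Banach's fixed-point theorem, and then read off the invariance $\bigcup_{\boldsymbol\omega,\boldsymbol\eta}W_{\boldsymbol\omega\boldsymbol\eta}(Gr(f))=Gr(f)$ from the functional equation. The only cosmetic difference is that the paper additionally builds the interpolation condition $g(y)=z_y$ for $y\in V_N$ into its function space $\mathcal{C}_*$, whereas you work in the slightly larger space and recover interpolation a posteriori from the fixed-point identity at the corners $(p_i,q_j)$; both choices are fine.
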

\begin{proof}
We know that $\mathcal{C}(SG' \times SG'')$ the space of all continuous functions forms a Banach space  with respect to supnorm. Let \begin{equation*}
\begin{aligned}
\mathcal{C}_*(SG' \times SG'')=\Big\{ g \in \mathcal{C}(SG' \times SG''):&g(y)=z_y,~y \in V_N, g(p_i,s)=g(t,q_i)=0,\\& \forall\ t \in SG',~s \in SG'',~i \in I\Big\}.
\end{aligned}
\end{equation*} Define a Read-Bajraterevic  map $T: \mathcal{C}_*(SG' \times SG'') \rightarrow \mathcal{C}_*(SG' \times SG'') $ by $$ (Tg)(t,s)= F_{\boldsymbol{\omega}\boldsymbol{\eta}}\big(L^{-1}_{\boldsymbol{\omega}}(t),K^{-1}_{\boldsymbol{\eta}}(s),g\big(L^{-1}_{\boldsymbol{\omega}}(t),K^{-1}_{\boldsymbol{\eta}}(s)\big)\big), ~~~\forall~ (t,s) \in L_{\boldsymbol{\omega}}(SG') \times K_{\boldsymbol{\eta}}(SG''),$$ 
 for every $~\boldsymbol{\omega},\boldsymbol{\eta} \in I^N.$

We first show that $T$ is well-defined. For this, we need to check the continuity of $Tg$ at the following point :
\begin{itemize}
    \item $(t,s) \in (L_{\boldsymbol{\omega}} (SG') \cap L_{\boldsymbol{\tau}} (SG')) \times K_{\boldsymbol{\eta}}(SG'')$
    \item $(t,s) \in L_{\boldsymbol{\omega}}(SG') \times (L_{\boldsymbol{\eta}} (SG'') \cap L_{\boldsymbol{\xi}} (SG'')) $
    \item $(t,s) \in (L_{\boldsymbol{\omega}} (SG') \cap L_{\boldsymbol{\tau}} (SG'))\times (L_{\boldsymbol{\eta}} (SG'') \cap L_{\boldsymbol{\xi}} (SG'')).$
\end{itemize}

From the construction of $SG'$ and $SG'',$ for any $t \in L_{\boldsymbol{\omega}} (SG') \cap L_{\boldsymbol{\tau}} (SG'),$ one may get the following relation: $\boldsymbol{\omega}= \omega_1\omega_2\dots \omega_{N-1}i,$ $\boldsymbol{\tau}= \omega_1\omega_2\dots \omega_{N-1}j$ and $t = L_{\boldsymbol{\omega}}(p_j)=L_{\boldsymbol{\tau}} (p_i)$ for some $i,j \in I.$
Now, since $g(p_k,s)=0, \forall s \in SG'',~k \in I,$ we have 
\begin{equation*}
\begin{aligned}
(Tg)(t,s)& = F_{\boldsymbol{\omega}\boldsymbol{\eta}}\big(L^{-1}_{\boldsymbol{\omega}}(t),K^{-1}_{\boldsymbol{\eta}}(s),g(L^{-1}_{\boldsymbol{\omega}}(t),K^{-1}_{\boldsymbol{\eta}}(s))\big)\\ & = F_{\boldsymbol{\omega}\boldsymbol{\eta}}\big( p_j,K^{-1}_{\boldsymbol{\eta}}(s), g(p_j,K^{-1}_{\boldsymbol{\eta}}(s))\big)\\& =\alpha_{\boldsymbol{\omega \eta}}(p_j,K^{-1}_{\boldsymbol{\eta}}(s)) g(p_j,K^{-1}_{\boldsymbol{\eta}}(s)) + h_{\boldsymbol{\omega}\boldsymbol{\eta}}(p_j,K^{-1}_{\boldsymbol{\eta}}(s))\\&=h_{\boldsymbol{\omega}\boldsymbol{\eta}}(p_j,K^{-1}_{\boldsymbol{\eta}}(s)),
\end{aligned}
\end{equation*}
and 
\begin{equation*}
\begin{aligned}
(Tg)(t,s)& = F_{\boldsymbol{\tau}\boldsymbol{\eta}}\big(L^{-1}_{\boldsymbol{\tau}}(t),K^{-1}_{\boldsymbol{\eta}}(s),g(L^{-1}_{\boldsymbol{\tau}}(x),K^{-1}_{\boldsymbol{\eta}}(s))\big)\\ & = F_{\boldsymbol{\tau}\boldsymbol{\eta}}\big( p_i,K^{-1}_{\boldsymbol{\eta}}(s), g(p_i,K^{-1}_{\boldsymbol{\eta}}(s))\big)\\& =\alpha_{\boldsymbol{\tau \eta}}(p_i,K^{-1}_{\boldsymbol{\eta}}(s)) g(p_i,K^{-1}_{\boldsymbol{\eta}}(s)) + h_{\boldsymbol{\tau}\boldsymbol{\eta}}(p_i,K^{-1}_{\boldsymbol{\eta}}(s))\\&=h_{\boldsymbol{\tau}\boldsymbol{\eta}}(p_i,K^{-1}_{\boldsymbol{\eta}}(s)).
\end{aligned}
\end{equation*}
With the help of conditions on $h_{\boldsymbol{\omega}\boldsymbol{\eta}}$, it follows that $Tg$ is continuous at $(t,s) \in (L_{\boldsymbol{\omega}} (SG') \cap L_{\boldsymbol{\tau}} (SG')) \times K_{\boldsymbol{\eta}}(SG'')$. Other cases can be dealt similarily. Further, since $F_{\boldsymbol{\omega}\boldsymbol{\eta}}$ is continuous for each $\boldsymbol{\omega},\boldsymbol{\eta} \in I^N$, $Tg$ is continuous, and $Tg \in \mathcal{C}_*(SG' \times SG'').$ Hence, $T$ is well-defined. 
Now, let $g,h \in \mathcal{C}_*(SG' \times SG'').$ Then we have 
\begin{equation*}
\begin{aligned}
|(Tg)(t,s)-(Th)(t,s)| &= \Big|F_{\boldsymbol{\omega}\boldsymbol{\eta}}\big(L^{-1}_{\boldsymbol{\omega}}(t),K^{-1}_{\boldsymbol{\eta}}(s),g\big(L^{-1}_{\boldsymbol{\omega}}(t),K^{-1}_{\boldsymbol{\eta}}(s)\big)\big)\\&~~~~~ - F_{\boldsymbol{\omega}\boldsymbol{\eta}}\big(L^{-1}_{\boldsymbol{\omega}}(t),K^{-1}_{\boldsymbol{\eta}}(s),h\big(L^{-1}_{\boldsymbol{\omega}}(t),K^{-1}_{\boldsymbol{\eta}}(s)\big)\big)\Big|\\& \le  |\alpha_{\boldsymbol{\omega \eta}}(t,s)| \Big|  g\big(L^{-1}_{\boldsymbol{\omega}}(t),K^{-1}_{\boldsymbol{\eta}}(s)\big) -h\big(L^{-1}_{\boldsymbol{\omega}}(t),K^{-1}_{\boldsymbol{\eta}}(s)\big)\Big|\\& \le  \|\alpha_{\boldsymbol{\omega \eta}}\|_{\infty} \|  g -h\|_{\infty}.
\end{aligned}
\end{equation*}
This in turn yields 
\[
\|Tg-Th\|_{\infty} \le \|\alpha_{\boldsymbol{\omega \eta}}\|_{\infty} \|  g -h\|_{\infty}.
\]
Since $\|\alpha_{\boldsymbol{\omega \eta}}\|_{\infty}<1$, $T$ is a contraction. Using the Banach fixed point theorem, $T$ has a unique fixed point $f \in \mathcal{C}_*(SG' \times SG'').$ Since $Tf=f,$
\begin{equation}\label{functional}
\begin{aligned} f(t,s)= F_{\boldsymbol{\omega}\boldsymbol{\eta}}\big(L^{-1}_{\boldsymbol{\omega}}(t),K^{-1}_{\boldsymbol{\eta}}(s),f\big(L^{-1}_{\boldsymbol{\omega}}(t),K^{-1}_{\boldsymbol{\eta}}(s)\big)\big), ~~~~~\forall~ (t,s) \in L_{\boldsymbol{\omega}}(SG') \times K_{\boldsymbol{\eta}}(SG''), 
\end{aligned}
\end{equation}
for every $~\boldsymbol{\omega},\boldsymbol{\eta} \in I^N.$
By using the proof of Theorem $3.8$ in \cite{Jha} (motivated by Barnsley \cite{MF2}) we can show that attractor associated with mentioned IFS is the graph of the above function. Now, we show that it is the graph of FIF $f$. With the help of functional equation (\ref{functional}) and $ SG' \times SG''= \cup_{\boldsymbol{\omega} ,\boldsymbol{\eta}\in I^n} L_{\boldsymbol{\omega}}(SG') \times K_{\boldsymbol{\eta}}(SG'')$, we have
\begin{equation*}
    \begin{aligned}
   \cup_{\boldsymbol{\omega},\boldsymbol{\eta} \in I^N} W_{\boldsymbol{\omega}\boldsymbol{\eta}}(Gr(f))= &   \cup_{\boldsymbol{\omega},\boldsymbol{\eta} \in I^N} \Big\{\big(L_{\boldsymbol{\omega}}(t),K_{\boldsymbol{\eta}}(s),f(L_{\boldsymbol{\omega}}(x),K_{\boldsymbol{\eta}}(s))\big):x \in I \Big\}\\
    = & \cup_{\boldsymbol{\omega},\boldsymbol{\eta} \in I^N} \big\{(t,s,f(t,s)):(t,s) \in  L_{\boldsymbol{\omega}}(SG') \times K_{\boldsymbol{\eta}}(SG'') \big\}\\
    =& Gr(f),
    \end{aligned}
\end{equation*} completing the proof.
\end{proof}
\begin{remark}
The assumption $g(p_i,s)=g(t,q_i)=0, \forall\ t \in SG',~s \in SG'',~i \in I$ can be weakened by taking constant and same scaling functions, that is, if  $\alpha_{\boldsymbol{\omega \eta}}= \alpha$ then $g(p_i,s)=g(p_j,s)$ and $g(t,q_i)=g(t,q_j)$ for all $\forall\ t \in SG',~s \in SG'',~i,j \in I$ will be sufficient to show that the mapping $T$ is well-defined. It may hint us that we can have many different constructions of FIF on $SG'\times  SG''$ as we have for fractal surfaces \cite{massopust90,massopust93,Ruan}.
\end{remark}

\section{Smoothness of Fractal Interpolation Functions on $SG \times SG$}\label{sec:sm}
In this section, we aim to discuss the smoothness property of the FIF $f: SG' \times SG'' \to \mathbb{R}$ obtained through the IFS:
$$L_{\boldsymbol{\omega}}(t)=\frac{1}{2^N}t + \sum_{k=1}^{N} \frac{1}{2^{k}}p_{\omega_k};  K_{\boldsymbol{\eta}}(s)=\frac{1}{2^N}s + \sum_{k=1}^{N} \frac{1}{2^{k}}q_{\eta_k}; ~~    F_{\boldsymbol{\omega \eta}}(t,s,x)= \alpha_{\boldsymbol{\omega \eta}}(t,s) x + h_{\omega \eta}(t,s), ~ \boldsymbol{\omega} \in I^N.$$ Targeting to give a simplified presentation, one may introduce the notation $a=\frac{1}{2^N}$, $a_{\boldsymbol{\omega}}=\sum_{k=1}^{N} \frac{1}{2^{k}}p_{\omega_k}$ and $b_{\boldsymbol{\omega}}=\sum_{k=1}^{N} \frac{1}{2^{k}}q_{\omega_k}$. Then we have $L_{\boldsymbol{\omega}}(t)=at+a_{\boldsymbol{\omega}}, ~K_{\boldsymbol{\omega}}(s)=as+b_{\boldsymbol{\omega}}$ for $ \boldsymbol{\omega} \in I^N.$ In this section, for computational convenience, we consider $\|(t,s) -(t',s')\|_2 \le 1 $ for all $(t,s), (t',s') \in SG' \times SG''.$
\par 

We show that for suitable choices of the parameters, the fractal function $f$ preserves the H\"{o}lder continuity of FIF $h$. 

For $t \in SG'$ and $\boldsymbol{\omega}_j \in I^N$, let $$L_{\boldsymbol{\omega}_1 \boldsymbol{\omega}_2\dots \boldsymbol{\omega}_m}(t)=L_{\boldsymbol{\omega}_1}\circ L_{\boldsymbol{\omega}_2} \circ \dots \circ L_{\boldsymbol{\omega}_m}(t), ~~L_{\boldsymbol{\omega}_1 \boldsymbol{\omega}_2\dots \boldsymbol{\omega}_m}(SG')=L_{\boldsymbol{\omega}_1}\circ L_{\boldsymbol{\omega}_2} \circ \dots \circ L_{\boldsymbol{\omega}_m}(SG' ) .$$ Define a shift operator $\sigma$ by $$ \sigma(\boldsymbol{\omega}_1 \boldsymbol{\omega}_2 \dots \boldsymbol{\omega}_m)=(\boldsymbol{\omega}_2 \boldsymbol{\omega}_3 \dots \boldsymbol{\omega}_m).$$ Let $ \sigma^k $ denote the $k-$fold autocomposition of $\sigma$ such that for $1 \le k \le m-1$, $L_{\sigma ^k(\boldsymbol{\omega}_1 \boldsymbol{\omega}_2\dots \boldsymbol{\omega}_m)}(t)= L_{\boldsymbol{\omega}_{k+1} \boldsymbol{\omega}_{k+2} \dots \boldsymbol{\omega}_m}(t)$  otherwise $L_{\sigma ^k(\boldsymbol{\omega}_1 \boldsymbol{\omega}_2\dots \boldsymbol{\omega}_m)}(t)=t.$ With the help of the successive iteration and induction, we may prove the following lemma; see also \cite{WY}.
\begin{lemma} \label{Smoothlem}
Let $f$ be a fractal function corresponding to the aforementioned IFS. For any $(t,s) \in SG' \times SG''$ and $\boldsymbol{\omega}_j \in I^N,$
\begin{equation*}
           \begin{aligned}
             L_{\boldsymbol{\omega}_1 \boldsymbol{\omega}_2\dots \boldsymbol{\omega}_m}(t)=a^{m} t + \sum_{k=1}^{m} a^{k-1} a_{\boldsymbol{\omega}_k}~\text{and}~ K_{\boldsymbol{\eta}_1 \boldsymbol{\eta}_2\dots \boldsymbol{\eta}_m}(s)=a^{m} s + \sum_{k=1}^{m} a^{k-1} b_{\boldsymbol{\eta}_k}
              \end{aligned}
           \end{equation*}
           and
  \begin{equation*}
             \begin{aligned}
               &f \big(L_{\boldsymbol{\omega}_1 \boldsymbol{\omega}_2\dots \boldsymbol{\omega}_m}(t), K_{\boldsymbol{\eta}_1 \boldsymbol{\eta}_2\dots \boldsymbol{\eta}_m}(s)\big)\\=& \Big(\prod_{k=1}^{m} \alpha_{\boldsymbol{\omega}_k\boldsymbol{\eta}_k}(L_{\sigma^k(\boldsymbol{\omega}_1 \boldsymbol{\omega}_2\dots \boldsymbol{\omega}_m) }(t),K_{\sigma^k(\boldsymbol{\eta}_1 \boldsymbol{\eta}_2\dots \boldsymbol{\eta}_m) }(s))\Big) f(t,s)\\&+ \sum_{r=1}^{m} \Big(\prod_{k=1}^{r-1} \alpha_{\boldsymbol{\omega}_k \boldsymbol{\eta}_k}(L_{\sigma^k(\boldsymbol{\omega}_1 \boldsymbol{\omega}_2\dots \boldsymbol{\omega}_m) }(t),K_{\sigma^k(\boldsymbol{\eta}_1 \boldsymbol{\eta}_2\dots \boldsymbol{\eta}_m) }(s))\Big)h_{\boldsymbol{\omega}_r\boldsymbol{\eta}_r}\big(L_{\sigma ^{r-1}(\boldsymbol{\omega}_1 \boldsymbol{\omega}_2\dots \boldsymbol{\omega}_m)}(t),K_{\sigma ^{r-1}(\boldsymbol{\eta}_1 \boldsymbol{\eta}_2\dots \boldsymbol{\eta}_m)}(s)\big)
               ,
                \end{aligned}
             \end{equation*}
  where
  \begin{equation*}
               \begin{aligned}
                 L_{\sigma ^k(\boldsymbol{\omega}_1 \boldsymbol{\omega}_2\dots \boldsymbol{\omega}_m)}(t)= a^{m-k} t + \sum_{l=1}^{m-k} a^{l-1} a_{\boldsymbol{\omega}_{k+l}}~\text{and}~K_{\sigma ^k(\boldsymbol{\eta}_1 \boldsymbol{\eta}_2\dots \boldsymbol{\eta}_m)}(s)= a^{m-k} s + \sum_{l=1}^{m-k} a^{l-1} b_{\boldsymbol{\omega}_{k+l}}.
                  \end{aligned}
               \end{equation*}
\end{lemma}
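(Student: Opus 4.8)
The plan is to prove all three displays by induction on the number $m$ of length-$N$ blocks, starting from the affine description of the maps and from the self-referential identity \eqref{functional}.

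First I would treat the composite contractions. Since $L_{\boldsymbol{\omega}}(t)=at+a_{\boldsymbol{\omega}}$ and $K_{\boldsymbol{\omega}}(s)=as+b_{\boldsymbol{\omega}}$ for every $\boldsymbol{\omega}\in I^N$, the case $m=1$ is the definition. For the inductive step write $L_{\boldsymbol{\omega}_1\boldsymbol{\omega}_2\dots\boldsymbol{\omega}_{m+1}}(t)=L_{\boldsymbol{\omega}_1}\bigl(L_{\boldsymbol{\omega}_2\dots\boldsymbol{\omega}_{m+1}}(t)\bigr)$, substitute the induction hypothesis for the length-$m$ word $\boldsymbol{\omega}_2\dots\boldsymbol{\omega}_{m+1}$, and apply $L_{\boldsymbol{\omega}_1}(x)=ax+a_{\boldsymbol{\omega}_1}$; collecting powers of $a$ gives $a^{m+1}t+\sum_{k=1}^{m+1}a^{k-1}a_{\boldsymbol{\omega}_k}$, and identically for $K$. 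The stated form of $L_{\sigma^{k}(\boldsymbol{\omega}_1\dots\boldsymbol{\omega}_m)}(t)$ is then this same identity applied to the truncated word $\boldsymbol{\omega}_{k+1}\dots\boldsymbol{\omega}_m$ of length $m-k$ (which degenerates to $t$ once $k\ge m$, the empty composition), and likewise for $K_{\sigma^{k}(\boldsymbol{\eta}_1\dots\boldsymbol{\eta}_m)}(s)$.

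Next I would derive the formula for $f$. Rewriting \eqref{functional} with $(t,s)$ replaced by $\bigl(L_{\boldsymbol{\omega}}(t'),K_{\boldsymbol{\eta}}(s')\bigr)$, $t'\in SG'$, $s'\in SG''$, yields for all $\boldsymbol{\omega},\boldsymbol{\eta}\in I^N$
\[
 f\bigl(L_{\boldsymbol{\omega}}(t'),K_{\boldsymbol{\eta}}(s')\bigr)=\alpha_{\boldsymbol{\omega}\boldsymbol{\eta}}(t',s')\,f(t',s')+h_{\boldsymbol{\omega}\boldsymbol{\eta}}(t',s').
\]
I would apply this with $\boldsymbol{\omega}=\boldsymbol{\omega}_1$, $\boldsymbol{\eta}=\boldsymbol{\eta}_1$ and $(t',s')=\bigl(L_{\sigma(\boldsymbol{\omega}_1\dots\boldsymbol{\omega}_m)}(t),K_{\sigma(\boldsymbol{\eta}_1\dots\boldsymbol{\eta}_m)}(s)\bigr)$; this point lies in $SG'\times SG''$ because $L_i(SG')\subset SG'$ and $K_i(SG'')\subset SG''$, and since $L_{\boldsymbol{\omega}_1}\bigl(L_{\sigma(\cdots)}(t)\bigr)=L_{\boldsymbol{\omega}_1\boldsymbol{\omega}_2\dots\boldsymbol{\omega}_m}(t)$ one factor peels off. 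Then I would invoke the induction hypothesis (for the length-$(m-1)$ words $\sigma(\boldsymbol{\omega}_1\dots\boldsymbol{\omega}_m)$ and $\sigma(\boldsymbol{\eta}_1\dots\boldsymbol{\eta}_m)$, whose $j$-th blocks are $\boldsymbol{\omega}_{j+1}$ and $\boldsymbol{\eta}_{j+1}$) to expand $f\bigl(L_{\sigma(\cdots)}(t),K_{\sigma(\cdots)}(s)\bigr)$, using $\sigma^{k}\circ\sigma=\sigma^{k+1}$; re-indexing the product and the sum absorbs the single leading factor $\alpha_{\boldsymbol{\omega}_1\boldsymbol{\eta}_1}$ into the partial products and shifts the summation so that the first summand becomes the bare term in $h_{\boldsymbol{\omega}_1\boldsymbol{\eta}_1}$, which is the asserted identity. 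Equivalently, one may unroll \eqref{functional} $m$ times directly: at step $j$ one records the factor $\alpha_{\boldsymbol{\omega}_j\boldsymbol{\eta}_j}$ and the additive term $h_{\boldsymbol{\omega}_j\boldsymbol{\eta}_j}$, each evaluated at the point obtained from $(t,s)$ by the appropriate remaining composite map, until after $m$ steps the argument of $f$ has been reduced to $(t,s)$.

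I expect no analytic difficulty here: everything reduces to the affine form of $L$ and $K$ and to one application of \eqref{functional} per block. The part requiring care --- and the only real obstacle --- is the combinatorial bookkeeping: keeping the subscripts of the $\alpha$- and $h$-factors synchronized with the truncations $\sigma^{k}$ of the composite words, and verifying that the partial products $\prod_{k=1}^{r-1}\alpha_{\boldsymbol{\omega}_k\boldsymbol{\eta}_k}$ telescope correctly through the induction, together with the one-line remark, made once, that every point arising along the way remains in $SG'\times SG''$ so that \eqref{functional} is legitimately applicable at each stage.
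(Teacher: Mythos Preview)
Your proposal is correct and follows exactly the route the paper indicates: the paper does not spell out a proof but simply remarks that the lemma follows ``with the help of the successive iteration and induction'' (referring also to \cite{WY}), which is precisely the induction on $m$ --- unrolling the affine maps $L_{\boldsymbol{\omega}}$, $K_{\boldsymbol{\eta}}$ and the fixed-point relation \eqref{functional} one block at a time --- that you describe. Your bookkeeping with $\sigma^{k}\circ\sigma=\sigma^{k+1}$ and the re-indexing of the partial products is the right way to make the inductive step go through.
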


Let us now state the next result borrowed from \cite{WY}.
\begin{lemma}\label{Inequality}
Let $r_i, q_i,~~ i=1,2,\dots, m,$ be  given real numbers. Then  $$ \prod_{i=1}^{m} r_i- \prod_{i=1}^{m} q_i= \sum_{i=1}^{m} \Big( \prod_{k=1}^{m-1}c_k^{(i)}\Big)( r_i - q_i),$$
where, for all $k=1,2, \dots, m-1,$ each $ c_k^{(i)}, i=1,2, \dots m,$ is a real number from the set $$ \{r_1,r_2, \dots r_{i-1},r_{i+1}, \dots, r_m, q_1, q_2, \dots , q_{i-1},q_{i+1}, \dots, q_m\}.$$
\end{lemma}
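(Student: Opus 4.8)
The plan is to prove Lemma~\ref{Inequality} by a simple telescoping decomposition of the difference of the two products; no sophisticated idea is needed, only careful index bookkeeping. First I would introduce, for $i=0,1,\dots,m$, the ``hybrid'' products
\[
P_i := q_1 q_2 \cdots q_i \, r_{i+1} r_{i+2} \cdots r_m,
\]
with the convention that an empty product equals $1$, so that $P_0=\prod_{i=1}^m r_i$ and $P_m=\prod_{i=1}^m q_i$. The whole proof then rests on the telescoping identity
\[
\prod_{i=1}^m r_i - \prod_{i=1}^m q_i = P_0 - P_m = \sum_{i=1}^m \bigl(P_{i-1}-P_i\bigr).
\]

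Next I would evaluate each summand. For fixed $i$, factoring out the common part gives
\[
P_{i-1}-P_i = \Bigl(\prod_{j=1}^{i-1} q_j\Bigr)\Bigl(\prod_{j=i+1}^{m} r_j\Bigr)(r_i-q_i).
\]
This is already exactly the asserted form: the coefficient of $(r_i-q_i)$ is a product of precisely $(i-1)+(m-i)=m-1$ factors, each of which is one of $q_1,\dots,q_{i-1}$ or one of $r_{i+1},\dots,r_m$, hence in particular lies in $\{r_1,\dots,r_{i-1},r_{i+1},\dots,r_m,q_1,\dots,q_{i-1},q_{i+1},\dots,q_m\}$. Relabelling these $m-1$ numbers as $c_1^{(i)},\dots,c_{m-1}^{(i)}$ and summing over $i$ completes the argument.

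Alternatively one could run an induction on $m$: the base case $m=1$ is immediate, and for the step one writes $\prod_{i=1}^{m} r_i - \prod_{i=1}^{m} q_i = r_m\bigl(\prod_{i=1}^{m-1} r_i - \prod_{i=1}^{m-1} q_i\bigr) + (r_m-q_m)\prod_{i=1}^{m-1} q_i$, applies the inductive hypothesis to the bracketed difference, and absorbs the extra factor $r_m$ (respectively $\prod_{i=1}^{m-1} q_i$) into the coefficients, checking that the count of factors increases from $m-2$ to $m-1$. I do not expect any genuine obstacle; the only delicate points are consistency of the empty-product conventions at the endpoints $i=1$ and $i=m$, and confirming that every coefficient has exactly $m-1$ factors so that the index range $k=1,\dots,m-1$ in the statement is matched precisely. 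The telescoping proof is the cleaner of the two and is the one I would present.
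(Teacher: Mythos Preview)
Your telescoping argument is correct and is the standard way to prove this identity; the factor count and the membership of each $c_k^{(i)}$ in the stated set are checked exactly as you describe. Note, however, that the paper does not actually supply its own proof of this lemma: it merely states the result and attributes it to \cite{WY}, so there is no in-paper argument to compare against beyond observing that your approach is the natural one and almost certainly coincides with what is in the cited reference.
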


\begin{theorem} \label{Holder}
Let the parameter maps $h_{\boldsymbol{\omega}\boldsymbol{\eta}}$, $\alpha_{\boldsymbol{\omega}\boldsymbol{\eta}}$ ($\boldsymbol{\omega},\boldsymbol{\eta} \in I^N$) be H\"{o}lder continuous functions with constants $K_h$, $K_{\alpha}$ and H\"{o}lder exponents $s_h$,  $s_{\alpha_{\boldsymbol{\omega}\boldsymbol{\eta}}}$ respectively. Define $\| \alpha \|_{\infty}= \max\{\|\alpha_{\boldsymbol{\omega}\boldsymbol{\eta}}\|_{\infty}: \boldsymbol{\omega},\boldsymbol{\eta} \in I^N \}$, $\delta = \frac{\| \alpha \|_{\infty}}{a}$ and $s_\alpha= \min \{ s_{\alpha_{\boldsymbol{\omega}\boldsymbol{\eta}}}:  \boldsymbol{\omega},\boldsymbol{\eta} \in I^N \}$. Then the following hold:
\begin{itemize}
\item[(1)] If $\|\alpha\|_{\infty} < \frac{1}{2^N}$, then $f$ is a H\"older continuous function with exponent $s$, where $s=\min\{s_h,s_{\alpha}\}.$
\item[(2)] If $\|\alpha\|_{\infty} = \frac{1}{2^N}$, then $f$ is a H\"older continuous function with exponent $s-\mu $ for some $ 0<\mu<1.$
\item[(3)] If $\|\alpha\|_{\infty} > \frac{1}{2^N}$, then $f$ is a H\"older continuous function with exponent $\lambda,$ where $0< \lambda \le s-1+\frac{\ln\|\alpha\|_{\infty}}{ \ln a} < 1.$
\end{itemize}

\end{theorem}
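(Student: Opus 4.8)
The plan is to establish a quantitative modulus estimate $\abs{f(p)-f(q)}\le C\,d^{\lambda}$ for points $p=(t,s)$, $q=(t',s')$ with $d=\|p-q\|_2\le1$, by iterating the self-referential equation for $f$ down to a generation comparable to $d$ and reading off the H\"older exponent from the geometric series that is produced. Throughout I use that $f$, being continuous on the compact set $SG'\times SG''$, is bounded by some $M$ (so $\abs{f(a)-f(b)}\le2M$ always), that $\|\alpha_{\boldsymbol{\omega}\boldsymbol{\eta}}\|_\infty\le\|\alpha\|_\infty<1$ and $\|h_{\boldsymbol{\omega}\boldsymbol{\eta}}\|_\infty\le M_h$ for every word, and — by the standing normalisation of this section — that $\operatorname{diam}(SG'\times SG'')\le1$.

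The first step is a geometric reduction. Given $p\neq q$ with $d\le1$, choose the integer $m\ge0$ with $a^{m+1}\le d<a^{m}$, so that $m\asymp\tfrac{\ln d}{\ln a}$ and $a^{m}\asymp d$. Each generation-$m$ cell $L_{\boldsymbol{\omega}_1\cdots\boldsymbol{\omega}_m}(SG')\times K_{\boldsymbol{\eta}_1\cdots\boldsymbol{\eta}_m}(SG'')$ has diameter $\asymp a^{m}$, and since $SG'$, $SG''$ are connected and finitely ramified, $p$ and $q$ lie in generation-$m$ cells that in each factor either coincide or meet in a single vertex. Inserting those junction vertices joins $p$ to $q$ by a chain $p=z_0,z_1,\dots,z_\ell=q$ of length $\ell$ bounded by an absolute constant, with each consecutive pair $z_{i-1},z_i$ lying in one common generation-$m$ cell. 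Hence it suffices to bound $\abs{f(z_{i-1})-f(z_i)}$ when $z_{i-1}=\bigl(L_{\boldsymbol{\omega}_1\cdots\boldsymbol{\omega}_m}(u),K_{\boldsymbol{\eta}_1\cdots\boldsymbol{\eta}_m}(v)\bigr)$ and $z_i=\bigl(L_{\boldsymbol{\omega}_1\cdots\boldsymbol{\omega}_m}(u'),K_{\boldsymbol{\eta}_1\cdots\boldsymbol{\eta}_m}(v')\bigr)$ carry the same word, with $(u,v),(u',v')\in SG'\times SG''$ and $\|(u,v)-(u',v')\|_2=a^{-m}\|z_{i-1}-z_i\|_2\le1$.

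For such a pair I would apply Lemma~\ref{Smoothlem} to $f(z_{i-1})$ and $f(z_i)$ and subtract, so that $f(z_{i-1})-f(z_i)$ becomes a leading term $\bigl(\prod_{k=1}^{m}\alpha_{\boldsymbol{\omega}_k\boldsymbol{\eta}_k}(\cdot_u)\bigr)f(u,v)-\bigl(\prod_{k=1}^{m}\alpha_{\boldsymbol{\omega}_k\boldsymbol{\eta}_k}(\cdot_{u'})\bigr)f(u',v')$ plus, for $r=1,\dots,m$, terms $\bigl(\prod_{k=1}^{r-1}\alpha_{\boldsymbol{\omega}_k\boldsymbol{\eta}_k}\bigr)h_{\boldsymbol{\omega}_r\boldsymbol{\eta}_r}$ differenced between $u$ and $u'$. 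Every $\alpha$-factor is $\le\|\alpha\|_\infty$; the decisive point is that the level-$k$ argument is $L_{\sigma^{k}(\boldsymbol{\omega}_1\cdots\boldsymbol{\omega}_m)}(u)$ (and likewise for $K$), whose two evaluations differ by $a^{m-k}\|(u,v)-(u',v')\|_2$ — the contraction ratio $a^{m-k}$ being exactly the one displayed in Lemma~\ref{Smoothlem}. Splitting the differences of products via Lemma~\ref{Inequality}, inserting $\abs{\alpha_{\boldsymbol{\omega}_k\boldsymbol{\eta}_k}(\cdot_u)-\alpha_{\boldsymbol{\omega}_k\boldsymbol{\eta}_k}(\cdot_{u'})}\le K_\alpha a^{(m-k)s_\alpha}$ and $\abs{h_{\boldsymbol{\omega}_r\boldsymbol{\eta}_r}(\cdot_u)-h_{\boldsymbol{\omega}_r\boldsymbol{\eta}_r}(\cdot_{u'})}\le K_h a^{(m-r+1)s_h}$ (using $\|(u,v)-(u',v')\|_2\le1$), controlling the leftover factors by $M$ and $M_h$, and summing the inner geometric series in powers of $a^{s_\alpha}$, one is left with
\begin{equation*}
\abs{f(z_{i-1})-f(z_i)}\;\le\;C_1\|\alpha\|_\infty^{m}\;+\;C_2\,a^{m s}\sum_{j=0}^{m-1}\bigl(\|\alpha\|_\infty\,a^{-s}\bigr)^{j},
\end{equation*}
where $s=\min\{s_h,s_\alpha\}$ and $C_1,C_2$ depend only on $M,M_h,K_\alpha,K_h,a,\|\alpha\|_\infty$; honestly there are two geometric sums, with ratios $\|\alpha\|_\infty a^{-s_h}$ and $\|\alpha\|_\infty a^{-s_\alpha}$, handled in exactly the same way, and the worse of the two governs the outcome.

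It remains to turn this into a H\"older estimate in $d$: since $a^{m}\asymp d$ one has $\|\alpha\|_\infty^{m}\asymp d^{\,\ln\|\alpha\|_\infty/\ln a}$, and summing over the $\ell$ links of the chain only changes the constant. The three regimes are exactly the three behaviours of the geometric sum, governed by whether $\|\alpha\|_\infty a^{-s}$ — and, since $s\le1$, equivalently whether $\delta=\|\alpha\|_\infty/a$ — is less than, equal to, or greater than $1$. When $\|\alpha\|_\infty<a=\tfrac1{2^N}$, the sum is bounded and the whole right-hand side is $O(d^{s})$ (here $\ln\|\alpha\|_\infty/\ln a>1\ge s$ makes $\|\alpha\|_\infty^{m}$ also $O(d^{s})$): this is (1). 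When $\|\alpha\|_\infty=a$, the sum is $O(m)=O(\abs{\ln d})$ at worst, and $\abs{\ln d}\,d^{s}\le C_\mu d^{\,s-\mu}$ for any $\mu\in(0,s)$: this is (2) (for $s<1$ no loss is actually needed). When $\|\alpha\|_\infty>a$, the sum is $O\bigl((\|\alpha\|_\infty a^{-s})^{m}\bigr)$, whence $a^{ms}(\|\alpha\|_\infty a^{-s})^{m}=\|\alpha\|_\infty^{m}\asymp d^{\,\ln\|\alpha\|_\infty/\ln a}$ and the estimate is $O\bigl(d^{\,\min\{s,\,\ln\|\alpha\|_\infty/\ln a\}}\bigr)$; since $s\le1$ forces $\min\{s,\ln\|\alpha\|_\infty/\ln a\}\ge s-1+\ln\|\alpha\|_\infty/\ln a$, this yields H\"older continuity with any exponent $\lambda\le s-1+\tfrac{\ln\|\alpha\|_\infty}{\ln a}$, a positive number precisely when $\|\alpha\|_\infty<a^{\,1-s}$: this is (3). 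I expect the main obstacle to be the geometric reduction of the second paragraph — rigorously producing the bounded-length chain of common generation-$m$ cells so that the iteration of Lemma~\ref{Smoothlem} can be run at the scale $m\asymp\ln d/\ln a$, rather than at the (possibly tiny) largest common generation of $p$ and $q$ — together with the bookkeeping of the doubly indexed family of terms coming from Lemma~\ref{Smoothlem} and the exponent tracking across the three cases; each individual inequality, once the ratios $a^{m-k}$ are correctly identified, is routine.
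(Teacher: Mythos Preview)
Your approach is essentially the paper's: expand $f$ at generation $m$ via Lemma~\ref{Smoothlem}, split the product differences with Lemma~\ref{Inequality}, apply the H\"older hypotheses on $h_{\boldsymbol{\omega}\boldsymbol{\eta}}$ and $\alpha_{\boldsymbol{\omega}\boldsymbol{\eta}}$, and read off the exponent from the resulting geometric sum in three regimes. Two remarks. First, your chain-through-junction-vertices reduction actually handles a point the paper glosses over: the paper simply writes ``Let $t,t'\in L_{\boldsymbol{\omega}_1\cdots\boldsymbol{\omega}_m}(SG')$'' at the chosen scale without arguing why one may assume a common cell, so your argument is the more careful one here. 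Second, because you bound the H\"older increments by $K_h\,a^{(m-r+1)s_h}$ (using $\|(u,v)-(u',v')\|_2\le1$) rather than the paper's cruder $K_h\,a^{-(r-1)}\|(t,s)-(t',s')\|_2^{s}$ (which uses $s_h\le1$ to pull the exponent outside), your geometric ratio comes out as $\|\alpha\|_\infty a^{-s}$ instead of the paper's $\delta=\|\alpha\|_\infty/a$. These two thresholds are \emph{not} equivalent when $s<1$ --- your parenthetical ``equivalently'' is false --- but this does not damage the argument: since $a<1$ and $s\le1$ give $\|\alpha\|_\infty a^{-s}\le\delta$, the hypothesis $\|\alpha\|_\infty<a$ of part~(1) still forces your ratio below~$1$, and your own case-by-case discussion (including the observation that no $\mu$-loss is needed in~(2) when $s<1$) correctly recovers, and in places slightly sharpens, the stated conclusions.
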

\begin{proof}
For any $(t,s),(t',s') \in SG' \times SG'',$ it is possible to find $m \ge 0$ such that $t \in L_{\boldsymbol{\omega}_1 \boldsymbol{\omega}_2\dots \boldsymbol{\omega}_m}(SG' ),~ s \in K_{\boldsymbol{\eta}_1 \boldsymbol{\eta}_2\dots \boldsymbol{\eta}_m}(SG'' )$ and $$a^{m+1} \le \|(t,s) -( t',s')\|_2 \le a^{m}  .$$  If $m=0,$ we set $L_{\boldsymbol{\omega}_1 \boldsymbol{\omega}_2\dots \boldsymbol{\omega}_m}(SG' )=SG'$. Let $t, t' \in L_{\boldsymbol{\omega}_1 \boldsymbol{\omega}_2\dots \boldsymbol{\omega}_m}(SG' ). $ Since $ t \in L_{\boldsymbol{\omega}_1 \boldsymbol{\omega}_2\dots \boldsymbol{\omega}_m}(SG'),~s \in K_{\boldsymbol{\eta}_1 \boldsymbol{\eta}_2\dots \boldsymbol{\eta}_m}(SG')$ there exists elements $\bar{t} \in SG'$, $\bar{s} \in SG''$ such that the following conditions hold due to Lemma \ref{Smoothlem}, 
\begin{equation} \label{EQN1}
           \begin{aligned}
              t=L_{\boldsymbol{\omega}_1 \boldsymbol{\omega}_2\dots \boldsymbol{\omega}_m}(\overline{t})= a^{m}  \overline{t} + \sum_{h=1}^{m} a^{h-1} a_{\boldsymbol{\omega}_h}, ~~s=K_{\boldsymbol{\eta}_1 \boldsymbol{\eta}_2\dots \boldsymbol{\eta}_m}(\overline{s})= a^{m}  \overline{s} + \sum_{h=1}^{m} a^{h-1} b_{\boldsymbol{\eta}_h}
              \end{aligned}
           \end{equation}
and
\begin{equation*}
           \begin{aligned}
            f(t,s) =& f(L_{\boldsymbol{\omega}_1 \boldsymbol{\omega}_2\dots \boldsymbol{\omega}_m}(\overline{t}),K_{\boldsymbol{\eta}_1 \boldsymbol{\eta}_2\dots \boldsymbol{\eta}_m}(\overline{s}))  \\=& \Big(\prod_{k=1}^{m} \alpha_{\boldsymbol{\omega}_k\boldsymbol{\eta}_k}(L_{\sigma^k(\boldsymbol{\omega}_1 \boldsymbol{\omega}_2\dots \boldsymbol{\omega}_m) }(t),K_{\sigma^k(\boldsymbol{\eta}_1 \boldsymbol{\eta}_2\dots \boldsymbol{\eta}_m) }(s))\Big) f(\overline{t},\overline{s})\\&+ \sum_{r=1}^{m} \Big(\prod_{k=1}^{r-1} \alpha_{\boldsymbol{\omega}_k \boldsymbol{\eta}_k}(L_{\sigma^k(\boldsymbol{\omega}_1 \boldsymbol{\omega}_2\dots \boldsymbol{\omega}_m) }(\overline{t}),K_{\sigma^k(\boldsymbol{\eta}_1 \boldsymbol{\eta}_2\dots \boldsymbol{\eta}_m) }(\overline{s}))\Big)h_{\boldsymbol{\omega}_r\boldsymbol{\eta}_r}\big(L_{\sigma ^{r-1}(\boldsymbol{\omega}_1 \boldsymbol{\omega}_2\dots \boldsymbol{\omega}_m)}(\overline{t}),K_{\sigma ^{r-1}(\boldsymbol{\eta}_1 \boldsymbol{\eta}_2\dots \boldsymbol{\eta}_m)}(\overline{s})\big).
              \end{aligned}
           \end{equation*}
From $(\ref{EQN1})$, we may write $\overline{t}$ and $\overline{s}$ as follows:
\begin{equation*}
           \begin{aligned}
               \overline{t} = a^{-m}\Big[   t-\sum_{l=1}^{m} a^{l-1} a_{\boldsymbol{\omega}_l} \Big]~\text{and}~\overline{s} = a^{-m}\Big[   s-\sum_{l=1}^{m} a^{l-1} b_{\boldsymbol{\eta}_h} \Big].
              \end{aligned}
           \end{equation*}
In view of Lemma \ref{Smoothlem} and the above equation, we get 
\begin{equation*}
               \begin{aligned}
                 L_{\sigma ^k(\boldsymbol{\omega}_1 \boldsymbol{\omega}_2\dots \boldsymbol{\omega}_m)}(\overline{t})= a^{-k} \Big[   t -\sum_{l=1}^{m} a^{l-1}  a_{\boldsymbol{\omega}_l} \Big]+ \sum_{l=1}^{m-k} a^{l-1} a_{\boldsymbol{\omega}_{k+l}}
                  \end{aligned}
               \end{equation*}
               and 
               \begin{equation*}
               \begin{aligned}
                 K_{\sigma ^k(\boldsymbol{\eta}_1 \boldsymbol{\eta}_2\dots \boldsymbol{\eta}_m)}(\overline{s})= a^{-k} \Big[   s -\sum_{l=1}^{m} a^{l-1}  b_{\boldsymbol{\eta}_l} \Big]+ \sum_{l=1}^{m-k} a^{l-1} b_{\boldsymbol{\eta}_{k+l}}.
                  \end{aligned}
               \end{equation*}
Similarly, since $(t',s') \in L_{\boldsymbol{\omega}_1 \boldsymbol{\omega}_2\dots \boldsymbol{\omega}_m}(SG') \times K_{\boldsymbol{\eta}_1 \boldsymbol{\eta}_2\dots \boldsymbol{\eta}_m}(SG''),$ there exists $(\overline{t'},\overline{s'}) \in SG' \times SG''$ such that $t',s'$ and $f(t',s')$ have expressions similar to the above. Consequently,
\begin{equation*}
               \begin{aligned}
                 &~ \big|f(t,s)- f(t',s')\big| \\ &~ \le   \Bigg\| \Big(\prod_{k=1}^{m} \alpha_{\boldsymbol{\omega}_k \eta_k}\big(L_{\sigma ^{k}(\boldsymbol{\omega}_1 \boldsymbol{\omega}_2\dots \boldsymbol{\omega}_m)}(\overline{t}),K_{\sigma^k(\boldsymbol{\eta}_1 \boldsymbol{\eta}_2\dots \boldsymbol{\eta}_m) }(\overline{s})\big)\Big) ~~f(\overline{t},\overline{s})\\& - \Big(\prod_{k=1}^{m} \alpha_{\boldsymbol{\omega}_k, \eta_k}\big(L_{\sigma ^{k}(\boldsymbol{\omega}_1 \boldsymbol{\omega}_2\dots \boldsymbol{\omega}_m)}(\overline{t'}),K_{\sigma^k(\boldsymbol{\eta}_1 \boldsymbol{\eta}_2\dots \boldsymbol{\eta}_m) }(\overline{s'})\big)\Big) ~~f(\overline{t'},\overline{s'}) \Bigg\|_2\\
                 &+    \sum_{r=1}^{m}\Bigg\| \Big(\prod_{k=1}^{r-1} \alpha_{\boldsymbol{\omega}_k \boldsymbol{\eta}_k}(L_{\sigma^k(\boldsymbol{\omega}_1 \boldsymbol{\omega}_2\dots \boldsymbol{\omega}_m) }(\overline{t}),K_{\sigma^k(\boldsymbol{\eta}_1 \boldsymbol{\eta}_2\dots \boldsymbol{\eta}_m) }(\overline{s}))\Big)h_{\boldsymbol{\omega}_r\boldsymbol{\eta}_r}\big(L_{\sigma ^{r-1}(\boldsymbol{\omega}_1 \boldsymbol{\omega}_2\dots \boldsymbol{\omega}_m)}(\overline{t}),K_{\sigma ^{r-1}(\boldsymbol{\eta}_1 \boldsymbol{\eta}_2\dots \boldsymbol{\eta}_m)}(\overline{s})\big)\\&- \Big(\prod_{k=1}^{r-1} \alpha_{\boldsymbol{\omega}_k \boldsymbol{\eta}_k}(L_{\sigma^k(\boldsymbol{\omega}_1 \boldsymbol{\omega}_2\dots \boldsymbol{\omega}_m) }(\overline{t'}),K_{\sigma^k(\boldsymbol{\eta}_1 \boldsymbol{\eta}_2\dots \boldsymbol{\eta}_m) }(\overline{s'}))\Big)h_{\boldsymbol{\omega}_r\boldsymbol{\eta}_r}\big(L_{\sigma ^{r-1}(\boldsymbol{\omega}_1 \boldsymbol{\omega}_2\dots \boldsymbol{\omega}_m)}(\overline{t'}),K_{\sigma ^{r-1}(\boldsymbol{\eta}_1 \boldsymbol{\eta}_2\dots \boldsymbol{\eta}_m)}(\overline{s'})\big)  \Bigg\|                     \\
                 \le &
                 \sum_{r=1}^{m} \Big|\Big(\prod_{k=1}^{r-1} \alpha_{\boldsymbol{\omega}_k \boldsymbol{\eta}_k}(L_{\sigma^k(\boldsymbol{\omega}_1 \boldsymbol{\omega}_2\dots \boldsymbol{\omega}_m) }(\overline{t}),K_{\sigma^k(\boldsymbol{\eta}_1 \boldsymbol{\eta}_2\dots \boldsymbol{\eta}_m) }(\overline{s}))\Big)\Big|~~\Big\| h_{\boldsymbol{\omega}_r\boldsymbol{\eta}_r}\big(L_{\sigma ^{r-1}(\boldsymbol{\omega}_1 \boldsymbol{\omega}_2\dots \boldsymbol{\omega}_m)}(\overline{t}),K_{\sigma ^{r-1}(\boldsymbol{\eta}_1 \boldsymbol{\eta}_2\dots \boldsymbol{\eta}_m)}(\overline{s})\big)\\&- h_{\boldsymbol{\omega}_r\boldsymbol{\eta}_r}\big(L_{\sigma ^{r-1}(\boldsymbol{\omega}_1 \boldsymbol{\omega}_2\dots \boldsymbol{\omega}_m)}(\overline{t'}),K_{\sigma ^{r-1}(\boldsymbol{\eta}_1 \boldsymbol{\eta}_2\dots \boldsymbol{\eta}_m)}(\overline{s'})\big)\Big\|_2 \\&+
                 \sum_{r=1}^{m}\Big|\Big(\prod_{k=1}^{r-1} \alpha_{\boldsymbol{\omega}_k \boldsymbol{\eta}_k}(L_{\sigma^k(\boldsymbol{\omega}_1 \boldsymbol{\omega}_2\dots \boldsymbol{\omega}_m) }(\overline{t}),K_{\sigma^k(\boldsymbol{\eta}_1 \boldsymbol{\eta}_2\dots \boldsymbol{\eta}_m) }(\overline{s}))\Big)\\& - \Big(\prod_{k=1}^{r-1} \alpha_{\boldsymbol{\omega}_k \boldsymbol{\eta}_k}(L_{\sigma^k(\boldsymbol{\omega}_1 \boldsymbol{\omega}_2\dots \boldsymbol{\omega}_m) }(\overline{t'}),K_{\sigma^k(\boldsymbol{\eta}_1 \boldsymbol{\eta}_2\dots \boldsymbol{\eta}_m) }(\overline{s'}))\Big) \Big|\\& \big\| h_{\boldsymbol{\omega}_r\boldsymbol{\eta}_r}\big(L_{\sigma ^{r-1}(\boldsymbol{\omega}_1 \boldsymbol{\omega}_2\dots \boldsymbol{\omega}_m)}(\overline{t'}),K_{\sigma ^{r-1}(\boldsymbol{\eta}_1 \boldsymbol{\eta}_2\dots \boldsymbol{\eta}_m)}(\overline{s'})\big) \big\|_2 \\&+ 2(\| \alpha\|_{\infty})^m \|f\|_{\infty}.
                  \end{aligned}
               \end{equation*}
 Set $s=\min\big\{s_h,s_ {\alpha}\big\}.$ Using $\|L_{\sigma ^r(\boldsymbol{\omega}_1 \boldsymbol{\omega}_2\dots \boldsymbol{\omega}_m)}(\overline{t})- L_{\sigma ^r(\boldsymbol{\omega}_1 \boldsymbol{\omega}_2\dots \boldsymbol{\omega}_m)}(\overline{t'})\|_2 = a^{-r}  \|t-t'\|_2$ and H\"{o}lder continuity of $h_{\boldsymbol{\omega}_r\boldsymbol{\eta}_r}$, we have
\begin{equation*}
               \begin{aligned}
                & \Big\| h_{\boldsymbol{\omega}_r\boldsymbol{\eta}_r}\big(L_{\sigma ^{r-1}(\boldsymbol{\omega}_1 \boldsymbol{\omega}_2\dots \boldsymbol{\omega}_m)}(\overline{t}),K_{\sigma ^{r-1}(\boldsymbol{\eta}_1 \boldsymbol{\eta}_2\dots \boldsymbol{\eta}_m)}(\overline{s})\big)- h_{\boldsymbol{\omega}_r\boldsymbol{\eta}_r}\big(L_{\sigma ^{r-1}(\boldsymbol{\omega}_1 \boldsymbol{\omega}_2\dots \boldsymbol{\omega}_m)}(\overline{t'}),K_{\sigma ^{r-1}(\boldsymbol{\eta}_1 \boldsymbol{\eta}_2\dots \boldsymbol{\eta}_m)}(\overline{s'})\big)\Big\|_2 \\&\le ~ K_h \Big\|\big(L_{\sigma ^{r-1}(\boldsymbol{\omega}_1 \boldsymbol{\omega}_2\dots \boldsymbol{\omega}_m)}(\overline{t}),K_{\sigma ^{r-1}(\boldsymbol{\eta}_1 \boldsymbol{\eta}_2\dots \boldsymbol{\eta}_m)}(\overline{s})\big)-\big(L_{\sigma ^{r-1}(\boldsymbol{\omega}_1 \boldsymbol{\omega}_2\dots \boldsymbol{\omega}_m)}(\overline{t'}),K_{\sigma ^{r-1}(\boldsymbol{\eta}_1 \boldsymbol{\eta}_2\dots \boldsymbol{\eta}_m)}(\overline{s'})\big)\Big\|_2^{s_h}\\
                 =~& K_h \Big[a^{-(r-1)} \|(t,s)- (t',s')\|_2 \Big]^{s_h}\\
                 \le~& K_h a^{-(r-1)} \|(t,s)- (t',s')\|_2^s.
               \end{aligned}
               \end{equation*}
  Now, let $K_{\alpha}=\max \{K_{\alpha_{\boldsymbol{\omega}\boldsymbol{\eta}}}:  \boldsymbol{\omega},\boldsymbol{\eta} \in I^N \}$. Since $\alpha_{\boldsymbol{\omega}\boldsymbol{\eta}}$ is H\"{o}lder continuous with exponent $s_{\alpha_{\boldsymbol{\omega}\boldsymbol{\eta}}}$ and H\"{o}lder constant $K_{\alpha_{\boldsymbol{\omega}\boldsymbol{\eta}}}$, in view of Lemma \ref{Inequality}, for $r\ge 2,$ we have 
\begin{equation*}
               \begin{aligned}
                & \Big|\prod_{k=1}^{r-1} \alpha_{\boldsymbol{\omega}_k,\boldsymbol{\eta}_k}\big(L_{\sigma ^{k}(\boldsymbol{\omega}_1 \boldsymbol{\omega}_2\dots \boldsymbol{\omega}_m)}(\overline{t}),K_{\sigma ^{k}(\boldsymbol{\eta}_1 \boldsymbol{\eta}_2\dots \boldsymbol{\eta}_m)}(\overline{s})\big) - \prod_{k=1}^{r-1} \alpha_{\boldsymbol{\omega}_k,\boldsymbol{\eta}_k}\big(L_{\sigma ^{k}(\boldsymbol{\omega}_1 \boldsymbol{\omega}_2\dots \boldsymbol{\omega}_m)}(\overline{t'}),K_{\sigma ^{k}(\boldsymbol{\eta}_1 \boldsymbol{\eta}_2\dots \boldsymbol{\eta}_m)}(\overline{s'})\big) \Big|\\ \le& \sum_{k=1}^{r-1} (\| \alpha\|_{\infty})^{r-2} K_\alpha \Big\| \big(L_{\sigma ^{k}(\boldsymbol{\omega}_1 \boldsymbol{\omega}_2\dots \boldsymbol{\omega}_m)}(\overline{t}),K_{\sigma ^{k}(\boldsymbol{\eta}_1 \boldsymbol{\eta}_2\dots \boldsymbol{\eta}_m)}(\overline{s})\big)- \big(L_{\sigma ^{k}(\boldsymbol{\omega}_1 \boldsymbol{\omega}_2\dots \boldsymbol{\omega}_m)}(\overline{t'}),K_{\sigma ^{k}(\boldsymbol{\eta}_1 \boldsymbol{\eta}_2\dots \boldsymbol{\eta}_m)}(\overline{s'})\big) \Big\|_2^{s_\alpha}\\
                 =~& \sum_{k=1}^{r-1} (\| \alpha\|_{\infty})^{r-2} K_\alpha \Big[a^{-k} \|(t,s)- (t',s')\|_2 \Big]^{s_\alpha}\\
                 \le ~& K_\alpha (\| \alpha\|_{\infty})^{r-2} \|(t,s)- (t',s')\|_2^s \sum_{k=1}^{r-1}a^{-k}\\
                 \le ~& K_\alpha (\| \alpha\|_{\infty})^{r-2} \|(t,s)- (t',s')\|_2^s \frac{a^{1-r}}{1-a}\\
                 =~& \frac{K_\alpha \delta^{r-2}}{a(1-a)} \|(t,s)-(t',s')\|_2^s.
               \end{aligned}
               \end{equation*}
Using the above estimates, we get
\begin{equation*}
               \begin{aligned}
                  \big|f(t,s)- f(t',s')\big|  \le &   \sum_{r=1}^{m} \big(\| \alpha\|_{\infty}\big)^{r-1} K_h a^{-(r-1)} \|(t,s)-(t',s')\|_2^s\\ &+\sum_{r=1}^{m}\frac{K_\alpha \|h\|_{\infty}\delta^{r-2}}{a(1-a)} \|(t,s)-(t',s')\|_2^s +2\big(\| \alpha\|_{\infty}\big)^m \|f\|_{\infty}\\
                  =&  \sum_{r=1}^{m} \big(\| \alpha\|_{\infty}\big)^{r-1} K_h a^{-(r-1)} \|(t,s)-(t',s')\|_2^s\\ &+ \sum_{r=2}^{m}\frac{K_\alpha \|h\|_{\infty}\delta^{r-2}}{a(1-a)} \|(t,s)-(t',s')\|_2^s +2 \|f\|_{\infty} \delta^m a^m.\\
                  \end{aligned}
               \end{equation*}
Since $a^{m+1} \le \|(t,s)- (t',s')\|_2,$ we have
\begin{equation*}
               \begin{aligned}
                  ~\big \|f(t,s)- f(t',s')\big\|_2 \le &  \sum_{r=1}^{m} \delta^{(r-1) }K_h  \|(t,s)- (t',s')\|_2^s\\ &+\sum_{r=2}^{m}\frac{K_\alpha \|h\|_{\infty}\delta^{r-2}}{a(1-a)} \|(t,s)- (t',s')\|_2^s + \frac{2 \|f\|_{\infty} \delta^m}{a} \|(t,s)- (t',s')\|_2 \\
                  \le&~ 4K \|(t,s)-(t',s')\|_2^s \sum_{r=1}^{m+1} \delta^{r-1},
               \end{aligned}
               \end{equation*}
               for a fixed suitable constant $K$.
 \begin{itemize}
 
 \item[Case 1:]
 If $ \delta < 1$, then we have $$\sum_{r=1}^{m+1}\delta^{r-1} \le \frac{1}{1-\delta}$$ and hence we obtain $$ |f(t,s)- f(t',s')|\le \frac{ 4K}{1-\delta} \|(t,s) -(t',s')\|_2^s.$$ That is, $f$ is  H\"older continuous with exponent $s.$
 \item[Case 2:] If $\delta = 1$, then $$ \|f(t,s)- f(t',s')\|_2 \le  4K(m+1) \|(t,s) - (t',s')\|_2^s.$$ Since $\|(t,s) -(t',s')\|_2 \le a^m< 1,$  we get $m \le \ln(\|(t,s)-(t',s')\|_2)/\ln(a) .$ With the help of a known inequality: $$0 < -x^{\mu} \ln x \le \dfrac{1}{\mu e}, ~ \text{for}~0< x \le 1~\text{ and}~ 0 < \mu < 1,$$ we select a suitable $0 < \mu < 1$ such that 
 \begin{equation*}
                \begin{aligned} 
                (m+1)\|(t,s)-(t',s')\|_2^s \le ~ & \Big(1+\frac{\ln(\|(t,s)-(t',s')\|_2)}{\ln a}\Big)\|(t,s)-(t',s')\|_2^s\\
                   = ~& \|(t,s)- (t',s')\|_2^s\\& + \frac{-\|(t,s)-(t',s')\|_2^{\mu}~\ln(\|(t,s)-(t',s')\|_2)}{|\ln a|}\|(t,s)-(t',s')\|_2^{s-\mu}\\
                   \le ~& \|(t,s)- (t',s')\|_2^s + \frac{1}{ \mu e |\ln a|}\|(t,s)-(t',s')\|_2^{s-\mu}\\
                   \le~ & \Big(1+\frac{1}{ \mu e~ |\ln a|}\Big) \|(t,s)-(t',s')\|_2^{s-\mu}.
                   \end{aligned} 
                \end{equation*}
 Hence, we have $$ |f(t,s)- f(t',s')| \le  4K \Big(1+\dfrac{1}{ \mu e |\ln a|}\Big) \|(t,s)-(t',s')\|_2^{s-\mu}.$$
 \item[Case 3:] If $\delta > 1$, then $$ |f(t,s)- f(t',s')| \le  4K \frac{\delta^{m+1}}{\delta-1}~~ \|(t,s) -(t',s')\|_2^s.$$ We now consider a positive number $\lambda$ with $0< \lambda <1$ such that $$ \delta^{m+1} \|(t,s) - (t',s')\|_2^s \le \|(t,s) - (t',s')\|_2^{\lambda}. $$ Further, one obtains $$ \lambda \le s+ \frac{(m+1)\ln \delta}{ \ln(\|(t,s) - (t',s')\|_2)}.$$ Since $a^{m+1} \le \|(t,s) - (t',s')\|_2,$ we obtain $ \dfrac{1}{\ln(\|(t,s) - (t',s')\|_2)} \le \dfrac{1}{(m+1) \ln(a)}.$ This in turn yields $\lambda \le s+ \dfrac{\ln \delta}{ \ln a}=s-1+\frac{\ln \|\alpha\|_{\infty}}{ \ln a} < 1.$ Therefore, we get $$|f(t,s)- f(t',s')| \le   ~~ \frac{4K}{\delta-1}~~ \|(t,s) -(t',s')\|_2^{\lambda}.$$
 \end{itemize}
Thus, the proof of the theorem is completed.
\end{proof}
\section{Dimension of FIF}\label{sec:fd}
In this section, we focus on estimating fractal dimension such as Hausdorff dimension and box dimension of the constructed FIFs. Here, as usual we denote by $ \dim_H(A)$ and $\dim_B(A)$ the Hausdorff dimension and box dimension of a set $A$ respectively. For more details and definitions of these dimensions, we encourage the reader to see the book \cite{Fal} of Falconer.
\begin{theorem}
Under the hypotheses of Theorem \ref{Holder}, if $\|\alpha\|_{\infty} < \frac{1}{2^N}$ then $$2 ~\frac{\log3}{\log2} \le \dim_H(Gr(f)) \le \overline{\dim}_B(Gr(f)) \le 1-s +2 ~\frac{\log3}{\log2}.$$
\end{theorem}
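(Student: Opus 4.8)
The plan is to treat the three inequalities in the chain separately, the point being that all of them are controlled by the dimension of the base space $SG'\times SG''$. The central inequality $\dim_H(Gr(f))\le\overline{\dim}_B(Gr(f))$ requires nothing: for every bounded subset of a Euclidean space the Hausdorff dimension is at most the upper box dimension \cite{Fal}. So the real content lies in the two outer bounds, and for both I would first record the auxiliary fact that $\dim_H(SG'\times SG'')=\overline{\dim}_B(SG'\times SG'')=2\,\tfrac{\log3}{\log2}$. Each gasket $SG'$, $SG''$ carries its natural self-similar measure, which is Ahlfors $\tfrac{\log3}{\log2}$-regular, so the product measure is Ahlfors $2\,\tfrac{\log3}{\log2}$-regular on $SG'\times SG''$; hence the Hausdorff, lower box, and upper box dimensions of $SG'\times SG''$ all coincide and equal $2\,\tfrac{\log3}{\log2}$. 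Concretely, the cell decomposition $SG'\times SG''=\bigcup_{\boldsymbol{\omega},\boldsymbol{\eta}\in I^{n}}L_{\boldsymbol{\omega}}(SG')\times K_{\boldsymbol{\eta}}(SG'')$ exhibits $9^{n}$ pieces of diameter comparable to $2^{-n}$, which is exactly what the box-counting computation below needs.

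For the lower bound on $\dim_H(Gr(f))$ I would use the coordinate projection $\pi:SG'\times SG''\times\mathbb{R}\to SG'\times SG''$ onto the first two factors. It is $1$-Lipschitz and $\pi(Gr(f))=SG'\times SG''$; since Lipschitz images do not increase Hausdorff dimension, $\dim_H(Gr(f))\ge\dim_H(SG'\times SG'')=2\,\tfrac{\log3}{\log2}$, giving the left-hand inequality.

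For the upper bound on $\overline{\dim}_B(Gr(f))$ I would invoke part (1) of Theorem \ref{Holder}: the hypothesis $\|\alpha\|_{\infty}<\tfrac{1}{2^{N}}$ makes $f$ Hölder continuous on $SG'\times SG''$ with exponent $s=\min\{s_h,s_\alpha\}\in(0,1]$, say $|f(u)-f(v)|\le C\,\|u-v\|_2^{s}$. Then run the standard box-counting estimate for graphs of Hölder functions. Take $\delta=2^{-n}$ and cover $SG'\times SG''$ by its $9^{n}\sim\delta^{-2\log3/\log2}$ cells $L_{\boldsymbol{\omega}}(SG')\times K_{\boldsymbol{\eta}}(SG'')$, each of diameter $\lesssim\delta$. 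Over each cell the oscillation of $f$ is at most $C\delta^{s}$, so the corresponding portion of $Gr(f)$ sits inside a vertical box of height $\lesssim\delta^{s}$ above that cell, and such a box is covered by $O(\delta^{s-1})$ cubes of side $\delta$ (here $s\le1$, so $\delta^{s-1}\ge1$). Multiplying the two counts, $Gr(f)$ is covered by $O\big(\delta^{-(2\log3/\log2+1-s)}\big)$ cubes of side $\delta$; letting $n\to\infty$ yields $\overline{\dim}_B(Gr(f))\le 1-s+2\,\tfrac{\log3}{\log2}$, which is the right-hand inequality.

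The only step needing genuine care is the upper-bound covering count: one must check that the product cells $L_{\boldsymbol{\omega}}(SG')\times K_{\boldsymbol{\eta}}(SG'')$ have diameters bounded both above and below by constant multiples of $2^{-n}$, so that the number of $\delta$-cubes needed to cover $SG'\times SG''$ is $\asymp\delta^{-2\log3/\log2}$ with no stray logarithmic factor, and that the Hölder estimate of Theorem \ref{Holder} is applied on pairs with $\|(t,s)-(t',s')\|_2\le1$ (automatic on the bounded set $SG'\times SG''$). The lower-bound projection argument and the $\dim_H\le\overline{\dim}_B$ step are routine.
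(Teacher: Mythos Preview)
Your proposal is correct and follows essentially the same approach as the paper: the lower bound via the Lipschitz projection $Gr(f)\to SG'\times SG''$ together with $\dim_H(SG'\times SG'')=2\log3/\log2$, and the upper bound by combining the H\"older estimate from Theorem~\ref{Holder} with the standard box-counting over the $9^n$ cells $L_{\boldsymbol{\omega}}(SG')\times K_{\boldsymbol{\eta}}(SG'')$ at scale $2^{-n}$. The only cosmetic difference is that you justify $\dim_H(SG'\times SG'')=2\log3/\log2$ via Ahlfors regularity, while the paper simply cites \cite[Corollary~7.4]{Fal}.
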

\begin{proof}
Let us first define the maximum range of the function $f$ over $X \subseteq SG' \times SG''$ as $R_f[X]= \sup_{(t,s),(t',s')\in X} |f(t,s)-f(t',s')|.$
In view of Theorem \ref{Holder}, $f$ satisfies H\"{o}lder condition, we have $$ R_f[L_{\boldsymbol{\omega}}(SG') \times K_{\boldsymbol{\eta}}(SG'')] \le  \frac{c}{2^{ns}}~~\text{for every}~\boldsymbol{\omega},\boldsymbol{\eta} \in I^n,$$ where $c$ is suitable constant depending the H\"older constant of $f$ and $s=\min\{s_h,s_{\alpha}\}.$ Suppose that $ \delta = \frac{1}{2^n}$ for some $n \in \mathbb{N}.$ If $N_{\delta}(Gr(f))$ denotes the number of $\delta-$cubes that intersect graph of $f,$ then $$  N_{\delta}(Gr(f)) \le 2.3^n.3^n + 2^n \sum_{\boldsymbol{\omega},\boldsymbol{\eta} \in I^n} R_f[L_{\boldsymbol{\omega}}(SG') \times K_{\boldsymbol{\eta}}(SG'')].$$We obtain $N_{\delta}(Gr(f)) \le 2.3^n.3^n + c2^{n(1-s)}3^n3^n .$
Upper box-dimension of $Gr(f)$ can be estimated in the following way $$ \overline{\lim}_{\delta \rightarrow 0} \frac{\log N_{\delta}(Gr(f))}{-\log \delta} \le \lim_{n \rightarrow \infty}\frac{\log(2.3^n3^n + c2^{n(1-s)}3^n3^n)}{\log 2^n}$$
which produces $ \overline{\lim}_{\delta \rightarrow 0} \frac{\log N_{\delta}(Gr(f))}{-\log \delta} \le 1-s +2~\frac{\log3}{\log2}.$ 

For the lower bound, we begin by defining a mapping $T : Gr(f) \rightarrow SG' \times SG'' $ by $T\big((t,s,f(t,s))\big)=(t,s).$ Then
\begin{equation*}
    \begin{aligned}
    \|T\big((t,s,f(t,s))\big)-T\big((t',s',f(t',s'))\big)\|_2&=\|(t,s)-(t',s')\|_2\\& \le \sqrt{\|(t,s)-(t',s')\|_2^2+ (f(t,s)-f(t',s'))^2}\\&= \|(t,s,f(t,s))-(t',s',f(t',s'))\|_2,
    \end{aligned}
\end{equation*}
implies that $T$ is Lipschitz. Now, for any $ (t,s) \in SG' \times SG''$, we can choose $(t,s,f(t,s)) \in Gr(f)$ such that $(t,s)=T\big((t,s,f(t,s))\big)$ which implies the surjectiveness of $T$. Recall (Cf. \cite[Corollary 2.4(a)]{Fal}) that if $g:A\subseteq \mathbb{R}^n \to \mathbb{R}^m$ is a Lipschitz function,  then $\dim_{H}\big( g(A) \big) \le \dim_{H}(A).$ Using \cite[Corollary 7.4]{Fal}, it is immediate that $\dim_H \big(SG' \times SG'')=2~\frac{\log3}{\log2}$. In view of the previous two observations, we get $$ \dim_H\big(Gr(f)\big) \ge \dim_H \big(T(Gr(f))\big)= \dim_H \big(SG' \times SG'')=2~\frac{\log3}{\log2},$$
proving the assertion.
\end{proof}

\bibliographystyle{amsplain}

\end{document}